\providecommand{\U}[1]{\protect\rule{.1in}{.1in}}
\newtheorem{theorem}{Theorem}
\newtheorem{lemma}[theorem]{Lemma}
\newtheorem{proposition}[theorem]{Proposition}
\newenvironment{proof}[1][Proof]{\noindent\textbf{#1.} }{\ \rule{0.5em}{0.5em}}
\begin{document}

\title{An inequality of Harish-Chandra}
\author{Nolan R. Wallach}
\maketitle

\begin{abstract}
In paper I of his masterpiece Harmonic Analysis on Real Reductive Groups,
Harish-Chandra included an important inequality that is useful in proving that
certain key integrals depending on a parameter converge for large values of
the parameter. His proof involved the Tarski-Seidenberg Theorem. The purpose
of this note is an elementary proof of the inequality which is an expansion of
the idea in my Real Reductive Groups I. This exposition fixes several critical
misprints in the original and can be considered to be an erratum for the book. \ 

\end{abstract}

\section{Introduction}

In \cite{Plancherel1} Lemma 32.2 Harish-Chandra stated an important inequality
with a proof that asserted that it was a consequence of the Tarski-Seidenberg
Theorem as stated in \cite{Hor1} on page 276. In the next section, after
setting up the appropriate notation, I will explain the inequality in
question. In my book \cite{RRGI} I stated this inequality as Lemma 4.A.2.3
(with an unfortunate misprint) and gave two "proofs" the first saying at a
certain point it follows from the Tarski-Seidenberg Theorem and the second
involved an elementary induction. Unfortunately the second proof also had some
misprints and missing symbols. This note can therefore be considered to be an
erratum to my book. Also, in the next section the necessary algebra will be
done in order to apply the elementary proof. No one was more careful in his
proofs than Harish-Chandra so I have no doubt that he was certain that he was
sketching a complete proof but the reason he gives in \cite{Plancherel1} (top
of page 193) seems to need further explanation. Since I use this inequality in
\cite{WPT} and since my exposition of it in \cite{RRGI} is riddled with typos
I feel that it is worthwhile to distribute this note which hopefully clears up
the subject. Also the proof given leaves less to the reader than a properly
edited version of the original. Included, as an appendix, is a compendium of
the results on the structure of parabolic subgroups of real reductive groups
that we use in this note.

\section{The set-up}

The notation for the material in this section can be found in the appendix on
parabolic subgroups. As in the appendix, $G$ will denote a real reductive
group as \ in \cite{BoWa} and $K$ a maximal compact subgroup of $G$ with
corresponding Cartan involution $\theta$. All of the terminology on parabolic
subgroups undefined in this section can be found in the appendix. Let $P_{o}$
be a minimal parabolic subgroup of $G$ with Langlands decomposition
$P_{o}=\left(  M_{o}\cap K\right)  A_{o}N_{o}$. We assume that $\mathfrak{a}%
_{o}\left(  =Lie(A_{o})\right)  $ is contained in $\mathfrak{q}=\{X\in
\mathfrak{g}|\theta X=-X\}$. Let $P$ be a parabolic subgroup of $G$ containing
$P_{o}$ with Langlands decomposition $P={}^{o}M_{P}A_{P}N_{P}$. Let $\bar
{P}=\theta(P)$ then $\bar{P}$ is a parabolic subgroup of $G$ containing
$\bar{P}_{o}=\theta(P_{o})$ the Langlands decomposition of $\bar{P}$ is
$\bar{P}={}^{o}M_{P}A_{P}\bar{N}_{P}$ with $\bar{N}_{P}=\theta(N_{P}) $. Let
\[
a_{\bar{P}}:G\rightarrow A_{P}
\]
be as in the appendix (note that ${}^{o}M_{\bar{P}}={}^{o}M_{P}$ and
$A_{\bar{P}}=A_{P}$).

Define $\rho_{P}\in\mathfrak{a}_{P}^{\ast}$ by
\[
\rho_{P}(h)=\frac{1}{2}\mathrm{tr}(ad(h)_{|\mathfrak{n}_{P}}).
\]
Let $\log:A_{P}\rightarrow\mathfrak{a}_{P}$ denote the inverse to the map
$\exp:\mathfrak{a}_{P}\rightarrow A_{P}$. If $\nu\in\left(  \mathfrak{a}%
_{P}\right)  _{\mathbb{C}}^{\ast}$ then define $a^{\nu}=e^{\nu(\log a)}$.

Let $\left\langle ...,...\right\rangle $ be as in the appendix and let
$\left\Vert ...\right\Vert $ be the corresponding norm on $\mathfrak{g}$. The
Harish-Chandra inequality that is the subject of this note is (the logarithm of)

\begin{proposition}
\label{H-C-Inequality}There exist $C>0$ and $m>0$ such that if $X\in
\mathfrak{n}_{P}$ then%
\[
a_{\bar{P}}(\exp(X))^{\rho_{P}}\geq C(1+\left\Vert X\right\Vert ^{2})^{m}.
\]
Notice that $\exp:\mathfrak{n}_{P}\rightarrow N_{P}$ is a diffeomrophism. Thus
if $\log:N_{P}\rightarrow\mathfrak{n}_{P}$ is the invers map then the
inequality reads%
\[
a_{\bar{P}}(n)^{\rho_{P}}\geq C(1+\left\Vert \log n\right\Vert ^{2})^{m},n\in
N_{P}.
\]

\end{proposition}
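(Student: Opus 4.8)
The plan is to convert the inequality into a lower bound for the norm of an explicit polynomial, vector--valued function on $\mathfrak n_P$, and then to establish that bound by induction on $\dim\mathfrak a_P$, the base case reducing to a computation in a real rank one group.

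\emph{Reduction.} Put $d=\dim\mathfrak n_P=\dim\bar{\mathfrak n}_P$, so that $\wedge^{d}\bar{\mathfrak n}_P$ is a line in $\wedge^{d}\mathfrak g$; fix a unit generator $\omega_0$ of it for the $K$--invariant norm on $\wedge^{d}\mathfrak g$ coming from $\langle\cdot,\cdot\rangle$. Since $\bar P$ normalizes $\bar N_P$, the line $\mathbb R\omega_0$ is $\mathrm{Ad}(\bar P)$--stable, and on it unipotent elements and elements of ${}^oM_P$ act by $\pm1$, an element $a\in A_P$ acts by $\det(\mathrm{Ad}(a)|_{\bar{\mathfrak n}_P})=a^{-2\rho_P}$, while $\mathrm{Ad}(K)$ acts by isometries. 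Hence, evaluating $\wedge^{d}\mathrm{Ad}(g^{-1})$ on $\omega_0$ after writing $g^{-1}$ according to the decomposition defining $a_{\bar P}$, we get
\[
a_{\bar P}(g)^{2\rho_P}=\bigl\Vert \wedge^{d}\mathrm{Ad}(g^{-1})\,\omega_0\bigr\Vert ,
\]
so, taking $g=\exp X$ and replacing $X$ by $-X$, Proposition \ref{H-C-Inequality} is equivalent to the existence of $c,m'>0$ with $\Vert\Phi(X)\Vert\ge c\,(1+\Vert X\Vert^{2})^{m'}$ for all $X\in\mathfrak n_P$, where $\Phi(X):=\wedge^{d}\mathrm{Ad}(\exp X)\,\omega_0$. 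As $\mathrm{ad}(X)$ is nilpotent, $\Phi$ is a polynomial map; and as $\mathrm{ad}(H)$ acts symmetrically on $\wedge^{d}\mathfrak g$ for $H\in\mathfrak a_P$, the $\mathfrak a_P$--weight spaces are orthogonal, $\omega_0$ has weight $-2\rho_P$, and $\Phi(X)-\omega_0$ lies in the sum of the strictly larger weight spaces; hence $\Vert\Phi(X)\Vert^{2}=1+\Vert\Phi(X)-\omega_0\Vert^{2}\ge 1$, and only the growth of the higher--weight part is at issue.

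\emph{Growth.} That $t\mapsto\Phi(tX)$ is non-constant for each $X\ne0$ is immediate: if $\mathrm{ad}(X)$ preserved $\bar{\mathfrak n}_P$, then, since $\theta X\in\bar{\mathfrak n}_P$, we would have $[X,\theta X]\in\bar{\mathfrak n}_P\cap\mathfrak q$; but $\bar{\mathfrak n}_P\cap\mathfrak q=0$ whereas $\langle[X,\theta X],H\rangle=\langle\theta X,[H,\theta X]\rangle<0$ for $H\in\mathfrak a_P$ in the open chamber and $X\ne0$. To upgrade this to a bound uniform in $X$ I would induct on $\dim\mathfrak a_P$. For the inductive step choose a maximal parabolic $Q$ with $P\subseteq Q$; then $\mathfrak n_Q$ is an ideal of $\mathfrak n_P$ and $\mathfrak n_P=\mathfrak n_Q\oplus(\mathfrak n_P\cap\mathfrak m_Q)$, the second summand being the nilradical of the parabolic $P\cap M_Q$ of $M_Q$. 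Writing $X=Y+V$ along this splitting and splitting $\wedge^{d}\bar{\mathfrak n}_P$ along $\bar{\mathfrak n}_Q$ and $\theta(\mathfrak n_P\cap\mathfrak m_Q)$, one isolates a weight component of $\Phi(X)$ which, up to lower--order terms that orthogonality of weight spaces prevents from cancelling in norm, is the product of the lower bound for $V$ furnished by the inductive hypothesis applied to $(M_Q,P\cap M_Q)$ (whose split rank is one less) and the lower bound for $Y$ from the base case applied to $(G,Q)$. Multiplying and using $\prod_j(1+\Vert X_j\Vert^{2})^{e_j}\ge(1+\Vert X\Vert^{2})^{\min_j e_j}$ for orthogonal pieces $X_j$ then gives a single $(c,m')$ good for all $X$. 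In the base case $P$ is maximal, $\mathfrak n_P$ is two-step nilpotent, and the subgroups generated by the root spaces for $\pm\beta_0$ and $\pm2\beta_0$ reduce $a_{\bar P}(\exp X)$ to an explicit algebraic function of $X$ of the familiar $SL_2(\mathbb R)$ type (the multiplicities of $\beta_0$ and $2\beta_0$ appearing as exponents), from which the desired estimate is read off.

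\emph{Main obstacle.} It is the uniformity: a polynomial map can be proper along every ray without being ``polynomially proper'', and this is exactly the gap Harish--Chandra filled with the Tarski--Seidenberg theorem. The elementary substitute is the induction above, whose delicate point is to keep track at each stage of which weight space carries the growth and to exploit orthogonality of the $K$--invariant norm on weight spaces, so that contributions of the various root directions cannot interfere destructively. Two minor points are absorbed along the way: $\rho_P$ need not lie in the weight lattice, but the $\wedge^{d}$--realization works with $2\rho_P$ throughout and one takes a square root at the end, harmless as $a_{\bar P}(\exp X)^{\rho_P}>0$; and if the appendix normalizes $a_{\bar P}$ by the opposite decomposition one replaces $g$ by $g^{-1}$ in the displayed identity, which changes nothing after the substitution $X\mapsto-X$.
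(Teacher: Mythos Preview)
Your reduction is exactly the paper's: with $\xi=\omega_0$ and $\sigma=\wedge^{d}\mathrm{Ad}$, one has $\Vert\sigma(g^{-1})\xi\Vert=a_{\bar P}(g)^{2\rho_P}$, so the proposition becomes a lower bound for the polynomial map $\psi(X)=\sigma(\exp(-X))\xi$. Your observation that $[X,\theta X]\neq 0$ for $X\neq 0$ is also the paper's, and is what makes the linear part injective.

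Where you diverge from the paper, and where the gap lies, is in the growth argument. Your base case is wrong as stated: for a \emph{maximal} parabolic $Q$ in a group of real rank $>1$, the nilradical $\mathfrak n_Q$ is graded by the coefficient of the omitted simple root, and this grading can have arbitrarily many steps (already three in split $G_2$), so $\mathfrak n_Q$ is not $2$--step and there is no reduction to a single $BC_1$ computation. Thus even accepting your inductive step, the ``rank one'' case you land on is as hard as the original problem. The inductive step itself is also only a hope at this level of detail: you assert that some $\mathfrak a_P$--weight component of $\Phi(Y+V)$ factors as a product of the two lower bounds ``up to lower--order terms that orthogonality prevents from cancelling,'' but neither the component nor the mechanism is identified, and the Baker--Campbell--Hausdorff cross--terms between $Y\in\mathfrak n_Q$ and $V\in\mathfrak n_P\cap\mathfrak m_Q$ (as well as the action of $\exp V$ on $\bar{\mathfrak n}_Q$) make this far from automatic.

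The paper's elementary substitute for Tarski--Seidenberg is different and cleaner. One fixes a single $H\in\mathfrak a_P$ giving a $\mathbb Z$--grading $\mathfrak n_P=\bigoplus_{j=1}^{q}(\mathfrak n_P)_j$ and the induced grading $Z=\bigoplus_{j\ge 0}Z_j$ of $\sigma(U(\mathfrak n_P))\xi$. A short computation (expanding $e^{-\sigma(X)}\xi$ and reading off grades) shows the \emph{triangular} structure
\[
\psi(X)_0=\xi,\qquad \psi(X)_j=T_jX_j+u_j(X_1,\dots,X_{j-1})\quad(1\le j\le q),
\]
with each $T_j:(\mathfrak n_P)_j\to Z_j$ linear and injective (by your $[X,\theta X]$ argument) and $u_j$ polynomial. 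The key lemma is then purely elementary: for any map $\varphi:W\to V$ of graded Hilbert spaces with this triangular form one has $\Vert\varphi(w)\Vert\ge C(1+\Vert w\Vert^{2})^{m}$. Its proof inducts not on split rank but on the grade $r$, splitting into the two regimes $\Vert T_rw_r\Vert>2\Vert u_r(w_1,\dots,w_{r-1})\Vert$ and its complement; in the first regime the new graded piece already controls $\Vert w_r\Vert$, in the second the polynomial bound on $u_r$ feeds $\Vert w_r\Vert$ back into the inductive estimate for $\varphi_{r-1}$. This is the missing idea in your sketch, and it handles your ``base case'' and ``inductive step'' simultaneously without any parabolic chain.
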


For the rest of this section we fix $P_{o}$ and $P$ as above with their given
Langlands decompositions. Let $H\in\mathfrak{a}_{P}$ be such that $adH $
defines a grade, $\mathfrak{g=\oplus}_{j\in\mathbb{Z}}\mathfrak{g}_{j}$ (that
is, $\mathfrak{g}_{j}$ is the $j$ eigenspace for $adH$) such that
$\mathfrak{p=\oplus}_{j\geq0}\mathfrak{g}_{j},\mathfrak{m}_{P}=\mathfrak{g}%
_{0}$ and $\mathfrak{n}_{P}=\mathfrak{\oplus}_{j\geq1}\mathfrak{g}_{j}$. Also,
note that the decomposition $\mathfrak{g=\oplus}_{j\in\mathbb{Z}}%
\mathfrak{g}_{j}$ is orthogonal. We will write $\left(  \mathfrak{n}%
_{P}\right)  _{j}=\mathfrak{g}_{j}$ if $j>0$. As we have observed in the
appendix $\left(  \mathfrak{n}_{P}\right)  _{1}\neq0$. Let $q$ be the maximum
of the $j$ such that $\left(  \mathfrak{n}_{P}\right)  _{j}\neq0$. Let
$r=\dim\mathfrak{n}_{p}$ and let $V=\wedge^{r}\mathfrak{g}$ and extend the
inner product, $\left\langle ...,...\right\rangle $, to $V$ in the usual way.
Let $\xi$ be a unit vector in $\wedge^{r}\mathfrak{n}_{\bar{P}}\mathfrak{.}$
Let $\sigma(g)=\wedge^{r}ad(g)$ we will also write $\sigma(X)=d\sigma(X)$. Set%
\[
Z=\sigma(U(\mathfrak{g}))\xi=\sigma(U(\mathfrak{n}_{P}\mathfrak{))\xi.}
\]
Set $Z_{j}$ equal to the $\sigma(H)$ eigenspace for eigenvalue $-2\rho
_{P}(H)+j$ for $j=0,1,...$. Then the decomposition
\[
Z=\oplus_{j\geq0}Z_{j}
\]
is orthogonal.

Observing that if $\bar{n}\in N_{\bar{P}},m\in{}^{o}M_{P}={}^{o}M_{\bar{P}}$
and $a\in A_{o}$ then $\sigma(ma\bar{n})\xi=\det(ad(m)_{|\mathfrak{\bar{n}%
}_{P}})a^{-2\rho_{P}}\xi$. Also, if $m\in{}^{o}M_{P}$ then $\left\vert
\det(ad(m)_{|\mathfrak{\mathfrak{n}}_{\bar{P}}})\right\vert =1$. Thus, if
$g=\bar{n}mak$ with $\bar{n}\in N_{\bar{P}}$,$a\in A_{P},m\in{}^{o}M_{P}$ then%
\[
\left\Vert \sigma(g)^{-1}\xi\right\Vert =a^{2\rho_{P}}=a_{\bar{P}}%
(g)^{2\rho_{P}}.
\]
We note that $\left\Vert \sigma(\exp-X)\xi\right\Vert ^{2}$ is a polynomial
in$X$ that is strictly positive. The Tarski-Seidenberg Theorem implies (see
\cite{Hor2}, Example A.2.7 p. 368) that if $f(x)$ is a polynomial function on
$\mathbb{R}^{n}$ such that $f(x)>0$ for all $x\in\mathbb{R}^{n}$ then there
exists $C>0$ and $r$ such that%
\[
f(x)\geq C(1+\left\Vert X\right\Vert ^{2})^{r}.
\]
Which seems to prove the above proposition. However, H\"{o}rmander shows on
page $369$ that there are polynomial functions, $f$, on $\mathbb{R}^{n}$ such
that $f(x)>0$ for all $x\in\mathbb{R}^{n}$ but any inequality of the type
displayed above has $r<0$. This is, however not Harish-Chandra's reference to
the Tarski-Seidenberg Theorem it is in, fact, to an earlier exposition of
H\"{o}rmander in \cite{Hor1} p. 276. The only pertinent result on that page is
Lemma 2.1 which doesn't directly prove \ the proposition since it gives
asymptotic behavior and not lower bounds. Obviously, Harish-Chandra felt that
the transition to the appropriate lower bound is obvious. The next lemma
reduces the proposition above to the Lemma that is the subject of the next
section. We will be using the notation $X_{i}$ for the orthogonal projection
of $X\in\mathfrak{n}_{P}$ to $\left(  \mathfrak{n}_{P}\right)  _{j}$ and
$z_{j}$ of the orthogonal projection of $z\in Z$ to $Z_{j}$.

In light of lemma \ref{MyInequality} in the next section the following result,
taking (in the notation of lemma \ref{MyInequality} and this lemme)
$\varphi=\psi,$ $W=\mathfrak{n}_{P},T_{j}w=\sigma(-w)\xi$, implies Proposition
\ref{H-C-Inequality}

\begin{lemma}
\label{Setup}Set $\psi(X)=\sigma(\exp(-X))\xi$ for $X\in\mathfrak{n}_{P} $
then $\psi(X)_{0}=\xi$, $\psi(X)_{1}=-\sigma(X_{1})\xi$ and if $2\leq j\leq q$
then%
\[
\psi(X)_{j}=-\sigma(X_{j})\xi+u_{j}(X_{1}+X_{2}+...+X_{j-1})
\]
with $u_{j}$ a polynomial on $\mathfrak{n}_{1}\oplus...\oplus\mathfrak{n}%
_{j-1}$ with values in $V_{j}$ of degree $j$. Also the map $\mathfrak{n}%
_{j}\rightarrow V_{j}$, $X\mapsto\sigma(X)\xi$ is injective for $1\leq j\leq
q$.
\end{lemma}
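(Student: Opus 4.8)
The plan is to compute $\psi(X)=\sigma(\exp(-X))\xi$ by expanding the exponential as a power series in $\sigma(X)=d\sigma(X)$ and tracking the $\sigma(H)$-grading. First I would record the basic fact that $\sigma(H)\xi = -2\rho_P(H)\xi$, i.e. $\xi\in Z_0$, which follows from the formula $\sigma(a\bar n m)\xi=\det(ad(m)_{|\bar{\mathfrak n}_P})a^{-2\rho_P}\xi$ noted just above the lemma (differentiate in $a$ at the identity). Next, since $X_i\in\mathfrak g_i$ we have $[\sigma(H),\sigma(X_i)]=\sigma([H,X_i])=i\,\sigma(X_i)$, so $\sigma(X_i)$ raises the $\sigma(H)$-eigenvalue by $i$; hence a monomial $\sigma(X_{i_1})\cdots\sigma(X_{i_k})\xi$ lies in the eigenspace for eigenvalue $-2\rho_P(H)+(i_1+\cdots+i_k)$, i.e. in $V_{i_1+\cdots+i_k}$ (using the shifted indexing so that $Z_j\subset V_j$). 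This immediately shows $\psi(X)_0=\xi$ (only the constant term of $e^{-\sigma(X)}$ contributes weight $0$, as all $i_\ell\ge 1$).

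Then I would write
\[
\psi(X)=\sum_{k\ge 0}\frac{(-1)^k}{k!}\,\sigma(X)^k\xi,
\qquad
\sigma(X)=\sum_{i=1}^q \sigma(X_i),
\]
and collect, for fixed $j$, all terms landing in $V_j$: these are indexed by compositions $(i_1,\dots,i_k)$ of $j$ with each $i_\ell\ge 1$. The unique term that is linear in a single block and uses the top index is $k=1$, $i_1=j$, giving $-\sigma(X_j)\xi$; every other composition of $j$ has all parts $\le j-1$, so the remaining sum $u_j(X_1,\dots,X_{j-1})$ is a polynomial in $X_1,\dots,X_{j-1}$ only, with values in $V_j$. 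For the degree claim: a composition of $j$ into $k$ parts each $\ge 1$ has $k\le j$, and assigning degree $i_\ell$ to the variable $X_{i_\ell}$ — but here we want total polynomial degree in the $X$'s, which is $k$; the maximal $k$ with a composition of $j$ is $k=j$ (all parts equal to $1$), so $u_j$ has degree $\le j$ in the entries, and in fact degree exactly $j$ coming from the $X_1^{\,j}$-type term $\frac{(-1)^j}{j!}\sigma(X_1)^j\xi$, which is nonzero because $\sigma(X_1)$ acts injectively on $\xi$ (see below, or because $\sigma(\exp(-tX_1))\xi$ has nontrivial Taylor expansion). For $j=1$ the sum degenerates to just $-\sigma(X_1)\xi$ as stated.

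The last assertion, injectivity of $X\mapsto\sigma(X)\xi$ on each $\mathfrak n_j$, is where the one genuinely structural input enters. The cleanest route is: $\sigma(X)\xi=(d/dt)|_{t=0}\,\sigma(\exp(tX))\xi$, and $\sigma(\exp(tX))\xi=\wedge^r ad(\exp tX)\,\xi$ is the image under $\wedge^r ad$ of the decomposable $r$-vector $\xi$ spanning $\wedge^r\bar{\mathfrak n}_P$; so $\sigma(\exp tX)\xi$ spans $\wedge^r\big(Ad(\exp tX)\bar{\mathfrak n}_P\big)$, and its $t$-derivative at $0$ vanishes iff $Ad(\exp tX)\bar{\mathfrak n}_P$ is stationary to first order, i.e. iff $[X,\bar{\mathfrak n}_P]\subset\bar{\mathfrak n}_P$. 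For $X\in\mathfrak n_j$ with $j\ge 1$ this forces $X$ to normalize $\bar{\mathfrak n}_P$; but $\mathfrak n_P\cap N_{\mathfrak g}(\bar{\mathfrak n}_P)=0$ since $\mathfrak g=\bar{\mathfrak n}_P\oplus\mathfrak m_P A_P\oplus\mathfrak n_P$ and one checks $[\mathfrak n_P,\bar{\mathfrak n}_P]$ always has a nonzero component in $\bar{\mathfrak n}_P\oplus\mathfrak m_P A_P$ for $0\ne X\in\mathfrak n_P$ — concretely, pairing with the invariant form, $\langle [X,Y],\theta Y\rangle$ for a suitable root vector $Y\in\bar{\mathfrak n}_P$ is nonzero. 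I expect this normalizer computation to be the main obstacle: it is the only place where the fine structure of parabolic subgroups (as collected in the appendix) is really used, and the cleanest formulation is probably to invoke that $\bar{\mathfrak n}_P$ is its own normalizer's nilradical, or equivalently that $N_G(\bar P)=\bar P$ and $\mathfrak n_P\cap\bar{\mathfrak p}=0$, together with the fact that an element of $\mathfrak n_P$ normalizing $\bar{\mathfrak n}_P$ would lie in $\bar{\mathfrak p}\cap\mathfrak n_P=0$.
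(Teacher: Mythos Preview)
Your argument is correct and follows the same route as the paper: expand $e^{-\sigma(X)}\xi$ as a power series, use that $\sigma(X_i)$ raises the $\sigma(H)$--grade by $i$ to sort terms by compositions of $j$, and separate the unique linear contribution $-\sigma(X_j)\xi$ from the remainder $u_j$ depending only on $X_1,\dots,X_{j-1}$.

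The only difference is in the injectivity step, which you flag as the main obstacle. You and the paper both reduce $\sigma(X)\xi=0$ to the condition $ad(X)\bar{\mathfrak n}_P\subset\bar{\mathfrak n}_P$; but where you appeal to the normalizer identity $N_{\mathfrak g}(\bar{\mathfrak n}_P)=\bar{\mathfrak p}$ together with $\mathfrak n_P\cap\bar{\mathfrak p}=0$, the paper dispatches it in one line: for $0\neq X\in\mathfrak n_P$ one has $\theta X\in\bar{\mathfrak n}_P$ and $[X,\theta X]\in\mathfrak a\setminus\{0\}$ (indeed $B([X,\theta X],H)=-\sum_j j\|X_j\|^2\neq 0$), so $[X,\theta X]\notin\bar{\mathfrak n}_P$. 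This is shorter than the structural argument you sketch and uses nothing beyond the Cartan involution and the inner product already in play.
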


\begin{proof}
Note that
\[
\psi(X)=\xi+\sum_{j\geq1}\frac{(-1)^{j}}{k!}\sigma(X)^{j}\xi
\]
since every every component in $X\in\mathfrak{n}$ is at least of degree $1$ in
the grade it is clear that%
\[
\psi(X)_{0}=\xi.
\]
Similarly, if $k>j\geq0$ then $\left(  \sigma(X)^{k}\xi\right)  _{j}=0$. Thus
$\psi(X)_{1}=-\sigma(X_{1})\xi$ and if $j>1$ then%
\[
\psi(X)_{j}=\sum_{k=1}^{j}\frac{(-1)^{k}}{k!}\sigma(X)^{k}\xi.
\]
using the above observations, if $j\leq q$ then
\[
\psi(X)_{j}=\sum_{k=1}^{j}\frac{(-1)^{k}}{k!}\left(  \sigma(X_{1}%
+...+X_{j})^{k}\xi\right)  _{j}.
\]
Hence%
\[
\psi(X)_{j}=-\sigma(X_{j})\xi+\sum_{k=2}^{j}\frac{(-1)^{k}}{k!}\left(
\sigma(X_{1}+...+X_{j-1})^{k}\xi\right)  _{j}
\]
since
\[
\left(  \sigma(X)^{k-l}\sigma(X_{j})\sigma(X)^{l-1}\xi\right)  _{j}=0
\]
if $k-l\geq0$, $l-1\geq0$ and one of $k-l>0$ or $l-1>0$. Take
\[
u_{j}(X_{1},...,X_{j-1})=\sum_{k=2}^{j}\frac{(-1)^{k}}{k!}\left(  \sigma
(X_{1}+...+X_{j-1})^{k}\xi\right)  _{j}\text{.}
\]
Finally,if $X\in\mathfrak{n}_{P}$ and $\sigma(X)\xi=0$ then
$ad(X)\mathfrak{\bar{n}}_{P}\mathfrak{\subset\mathfrak{\bar{n}}}_{P}$. But if
$X\neq0$ then $[X,\theta X]\in\mathfrak{a}-\{0\}$.
\end{proof}

\section{A key lemma\label{key}}

\begin{lemma}
\label{MyInequality}Let $\left(  W,\left\langle ...,...\right\rangle \right)
$ and $(V,(...,...))$ be non-zero finite dimensional Hilbert spaces over
$\mathbb{R}$ such that $W=\oplus_{j=1}^{q}W_{j}$ and $V=\oplus_{_{j=0}}%
^{p}V_{j}$ orthogonal direct sums. If $w\in W$ (resp. $v\in V)$ set $w_{j}$
(resp. $v_{j}$) equal to its orthogonal projection into $W_{j}$ (resp. $V_{j}%
$). Let $\varphi:W\rightarrow V$ be a map satisfying:

If $j=0$ then $\varphi(w)_{0}$ is non-zero and constant.

If $j=1$ then%
\[
\varphi(w)_{1}=T_{1}w_{1}\text{.}
\]

If $1<j\leq q$ then%
\[
\varphi(w)_{j}=T_{j}w_{j}+u_{j}(w_{1}+...+w_{j-1})
\]
with $T_{j}$ an injective linear map of $W_{j}$ into $V_{j}$ for $j=1,...,q$
and $u_{j}$ is a polynomial of degree $d_{j}$ with values in $V_{j}.$

Then there exists $m>0$ and $C>0$ such that
\[
\left\Vert \varphi(w)\right\Vert \geq C(1+\left\Vert w\right\Vert ^{2})^{m}.
\]

\end{lemma}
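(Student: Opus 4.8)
The plan is to prove the lower bound $\|\varphi(w)\| \geq C(1+\|w\|^2)^m$ by inducting on the number of graded pieces $q$ of $W$, peeling off the bottom layer $W_1$ at each stage. The intuition is that $\varphi(w)_1 = T_1 w_1$ already controls $\|w_1\|$ linearly (since $T_1$ is injective, hence bounded below on its domain), so as long as $\|w_1\|$ is comparably large to $\|w\|$ we are done immediately; the difficulty is confined to the region where $\|w_1\|$ is small compared with $\|w_2\| + \dots + \|w_q\|$, and there we want to recurse on the space $W' = W_2 \oplus \dots \oplus W_q$.

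\medskip

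Here is how I would organize the induction. For the base case $q=1$ we have $\|\varphi(w)\| \geq \|\varphi(w)_1\| = \|T_1 w_1\| \geq c\|w_1\| = c\|w\|$, and since the target exponent $m$ may be taken small (indeed any $m \le 1/2$ works after adjusting $C$), the bound $\|w\| \geq c'(1+\|w\|^2)^{m}$ holds on the region $\|w\|$ large, while on a compact region it holds because $\|\varphi(w)\|$ is continuous and strictly positive there (using $\varphi(w)_0 \ne 0$). For the inductive step, fix a small constant $\epsilon > 0$ to be chosen. \emph{Case 1:} $\|w_1\| \ge \epsilon\|w\|$. Then $\|\varphi(w)\| \ge \|T_1 w_1\| \ge c\|w_1\| \ge c\epsilon\|w\|$, which gives the desired bound as above. \emph{Case 2:} $\|w_1\| < \epsilon\|w\|$. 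Write $w = w_1 + w'$ with $w' = w_2 + \dots + w_q$; then $\|w'\| \ge (1-\epsilon)\|w\|$, so it suffices to bound $\|\varphi(w)\|$ below in terms of $\|w'\|$. The components $\varphi(w)_j$ for $j \ge 2$ have the form $T_j w_j + u_j(w_1 + \dots + w_{j-1})$; I would split $u_j(w_1 + \dots + w_{j-1}) = u_j(w_2 + \dots + w_{j-1}) + R_j$ where $R_j$ collects all the terms involving at least one factor of $w_1$, so that $\|R_j\|$ is bounded by $\|w_1\|$ times a polynomial in $\|w\|$, i.e. by $\epsilon \cdot (\text{poly in } \|w\|)$. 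Thus $\varphi(w)_j = \varphi'(w')_j + R_j$ where $\varphi'$ is a map of the same form on $W'$ (with the same injective linear parts $T_j$ and shifted polynomials $u_j$ restricted to $W_2 \oplus \dots \oplus W_{j-1}$).

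\medskip

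By the inductive hypothesis applied to $\varphi'$ on $W'$ (which has $q-1$ graded pieces), there are $C', m' > 0$ with $\|\varphi'(w')\| \ge C'(1 + \|w'\|^2)^{m'}$. Hence
\[
\|\varphi(w)\| \ge \|\varphi'(w')\| - \Big(\textstyle\sum_{j \ge 2}\|R_j\|\Big) \ge C'(1+\|w'\|^2)^{m'} - \epsilon \, P(\|w\|),
\]
where $P$ is a fixed polynomial. The crux is now a competition of growth rates: $(1+\|w'\|^2)^{m'}$ grows like $\|w\|^{2m'}$ while the error grows like $\epsilon\|w\|^{\deg P}$, and in general $\deg P$ can exceed $2m'$, so we cannot simply absorb the error for $\|w\|$ large by shrinking $\epsilon$. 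This is the step I expect to be the main obstacle, and the fix is to not treat Case 2 as a single region but to subdivide it further according to which partial sum $\|w_1\| + \dots + \|w_{k}\|$ is small relative to $\|w_{k+1}\| + \dots + \|w_q\|$ — equivalently, to run the peeling argument so that at the stage where we have already controlled $w_1, \dots, w_{k}$ we are comparing against only the polynomial contributions that genuinely survive. Concretely, I would strengthen the induction hypothesis to a statement of the form: there exist $C, m > 0$ and for each $\delta > 0$ a constant $C_\delta$ such that $\|\varphi(w)\| \ge C(1+\|w\|^2)^m$ whenever $\|w\| \ge C_\delta$ \emph{and} $\|w_1\| \ge \delta \|w\|$ — then nest these, choosing the thresholds $\delta$ at successive levels small enough (each depending on the data one level up) that the polynomial error from layer $1$ is dominated by the guaranteed lower bound coming from layers $2$ through $q$. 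The bookkeeping of how $\epsilon$ and the exponents propagate through the $q$ levels is the real content; once the large-$\|w\|$ estimate is in hand with a uniform exponent $m$ and constant $C$, the bounded region is handled, as in the base case, by continuity and strict positivity of $\varphi$ (here we use $\varphi(w)_0$ nonzero constant, so $\|\varphi(w)\|$ never vanishes).
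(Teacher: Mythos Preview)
Your induction runs in the wrong direction, and the obstacle you flag is not a bookkeeping issue but a genuine failure of the bottom-up scheme. Peeling off $W_1$ perturbs \emph{every} higher component $\varphi(w)_j$ by a remainder $R_j$ whose size is of order $\|w_1\|\cdot(1+\|w\|)^{d_j-1}$; even with $\|w_1\|<\epsilon\|w\|$ this grows like $\epsilon\,\|w\|^{d_j}$, while the inductive lower bound $\|\varphi'(w')\|\gtrsim(1+\|w'\|^2)^{m'}$ grows only like $\|w\|^{2m'}$ with $m'\le 1$. Already for $q=2$ and $d_2>2$ the subtraction $\|\varphi'(w')\|-\|R_2\|$ is eventually negative for every fixed $\epsilon>0$, so no choice of threshold rescues Case~2. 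Your proposed subdivision ``according to which partial sum is small'' does not help here: for $q=2$ there is nothing further to subdivide, and the strengthened hypothesis you write down still only covers the region $\|w_1\|\ge\delta\|w\|$, i.e.\ Case~1.

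The paper's argument inducts in the opposite direction: set $\varphi_r(w)=\sum_{j\le r}\varphi(w)_j$ and prove $\|\varphi_r(w)\|^2\ge C_r(1+\|w_1+\cdots+w_r\|^2)^{m_r}$ by adding the \emph{top} layer at each step. The new component $\varphi(w)_r=T_rw_r+u_r(w_1+\cdots+w_{r-1})$ is orthogonal to $\varphi_{r-1}(w)$, so $\|\varphi_r\|^2=\|\varphi_{r-1}\|^2+\|\varphi(w)_r\|^2$ with no subtraction. The case split is on whether $\|T_rw_r\|>2\|u_r\|$. If so, $\|\varphi(w)_r\|\ge\tfrac12\|T_rw_r\|\gtrsim\|w_r\|$ and one combines with the inductive bound via the elementary inequality $a^m+b\ge\tfrac12(a+b)^m$ for $a\ge1$, $b\ge0$, $0<m\le1$. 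If not, then $\|w_r\|\lesssim\|u_r\|\le M_r(1+\|w_1+\cdots+w_{r-1}\|^2)^{d_r/2}$: the polynomial now gives an \emph{upper} bound on $\|w_r\|$ in terms of the previously controlled variables, so $(1+\|w_1+\cdots+w_r\|^2)$ is bounded by a power of $(1+\|w_1+\cdots+w_{r-1}\|^2)$, and the inductive hypothesis on $\varphi_{r-1}$ closes the loop at the cost of shrinking the exponent to $m_r=m_{r-1}/\max\{d_r/(2m_{r-1}),1\}$. The triangular structure is used in the direction that makes $u_r$ a help rather than an obstruction.
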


\begin{proof}
Let $v_{0}=\varphi(w)_{0}$ (for all $w\in W$). Replacing $\varphi$ with
$\frac{1}{\left\Vert v_{0}\right\Vert }\varphi$ we may assume that $\left\Vert
v_{o}\right\Vert =1$. Define%
\[
\varphi_{r}(w)=\sum_{j=0}^{r}\varphi(w)_{j}.
\]
Then the conditions on $\varphi$ imply that if $r\leq q$ then $\varphi
_{r}(w)=\varphi_{r}(w_{1}+...+w_{r})$. We first prove by induction on $r$ that
there exist $C_{r}>0$ and $1\geq m_{r}>0$ such that
\[
\left\Vert \varphi_{r}(w)\right\Vert ^{2}\geq C_{r}(1+\left\Vert
w_{1}+...+w_{r}\right\Vert ^{2})^{m_{r}}.
\]
If $r=1$ then
\[
\varphi_{1}(w_{1})_{0}+\varphi_{1}(w_{1})_{1}=v_{0}+T_{1}w_{1}.
\]
so%
\[
\left\Vert \varphi_{1}(w_{1})\right\Vert ^{2}=1+\left\Vert T_{1}%
w_{1}\right\Vert ^{2}
\]
since $T_{j}$ is injective \thinspace$\left\Vert T_{j}y_{j}\right\Vert
\geq\lambda_{j}\left\Vert y_{j}\right\Vert $. Let $c_{1}$ be the minimum of
$1$ and $\lambda_{1}^{2}$. Then
\[
\left\Vert \varphi_{1}(w_{1})\right\Vert ^{2}\geq c_{1}(1+\left\Vert
w_{1}\right\Vert ^{2}).
\]
Assume the result for $r-1\geq1$. If $r>q$ then since $r-1\geq q$ we have
\[
\left\Vert \varphi_{r}(w)\right\Vert ^{2}=\left\Vert \varphi_{r-1}%
(w)+\varphi(w)_{r}\right\Vert ^{2}\geq\left\Vert \varphi_{r-1}(w)\right\Vert
^{2}\geq C_{r-1}(1+\left\Vert w_{1}+...+w_{q}\right\Vert ^{2})^{m_{r-1}}
\]%
\[
=C_{r-1}(1+\left\Vert w\right\Vert ^{2})^{m_{r-1}}.
\]
So we may assume that $r\leq q$. Proceeding with the inductive step
\[
\left\Vert \varphi_{r}(w_{1}+...+w_{r})\right\Vert ^{2}=\left\Vert
\varphi_{r-1}(w_{1}+...+w_{r-1})\right\Vert ^{2}+\left\Vert T_{r}w_{r}%
+u_{r}(w_{1}+...+w_{r-1}\right\Vert ^{2}.
\]
Let
\[
S_{r}=\{w_{1}+...+w_{r}|\left\Vert T_{r}w_{r}\right\Vert >2\left\Vert
u_{r}(w_{1},...,w_{r-1})\right\Vert \}.
\]
If
\[
w_{1}+...+w_{r}\in S_{r}
\]
then%
\[
\left\Vert T_{r}w_{r}+u_{r}(w_{1}+...+w_{r-1})\right\Vert \geq\frac{1}%
{2}\left\Vert T_{r}w_{r}\right\Vert \geq\frac{\lambda_{r}}{2}\left\Vert
w_{r}\right\Vert .
\]
Thus%
\[
\left\Vert \varphi_{r}(w_{1}+...+w_{r})\right\Vert ^{2}\geq\left\Vert
\varphi_{r-1}(w_{1}+...+w_{r-1})\right\Vert ^{2}+\frac{\lambda_{r}^{2}}%
{4}\left\Vert w_{r}\right\Vert ^{2}
\]%
\[
\geq C_{r-1}^{2}(1+\left\Vert w_{1}+...+w_{r-1}\right\Vert ^{2})^{m_{r-1}%
}+\frac{\lambda_{r}^{2}}{4}\left\Vert w_{r}\right\Vert ^{2}
\]
using the inductive hypothesis with $1\geq m_{r-1}>0$. Setting $L=\min
\{C_{r-1}^{2},\frac{\lambda_{r}^{2}}{4}\}$ then%
\[
\left\Vert \varphi(w_{1}+...+w_{r})\right\Vert ^{2}\geq L((1+\left\Vert
w_{1}+...+w_{r-1}\right\Vert ^{2})^{m_{r-1}}+\left\Vert w_{r}\right\Vert
^{2}).
\]
In Lemma \ref{trivial} take $a=1+\left\Vert w_{1}+...+w_{r-1}\right\Vert ^{2}
$ and $b=\left\Vert w_{r}\right\Vert ^{2}$ then we have%
\[
\left\Vert \varphi(w_{1}+...+w_{r})\right\Vert ^{2}\geq\frac{L}{2}%
((1+\left\Vert w_{1}+...+w_{r-1}\right\Vert )^{2}+\left\Vert w_{r}\right\Vert
^{2})^{m_{r-1}}.
\]

If
\[
w_{1}+...+w_{r}\notin S_{r}
\]
then%
\[
\left\Vert u_{r}(w_{1},...,w_{r-1})\right\Vert \geq\frac{1}{2}\left\Vert
T_{r}w_{r}\right\Vert \geq\frac{\lambda_{r}}{2}\left\Vert w_{r}\right\Vert .
\]
The inductive hypothesis implies that
\[
\left\Vert \varphi(w_{1}+...+w_{r-1})\right\Vert ^{2}\geq C_{r-1}(1+\left\Vert
w_{1}+...+w_{r-1})\right\Vert ^{2})^{m_{r-1}}
\]
also since $u_{r}(w_{1},...,w_{r-1})$ is a polynomial of degree $d_{r}$
\ there exists $M_{r}$ such that
\[
\left\Vert u_{r}(w_{1},...,w_{r-1})\right\Vert \leq M_{r}(1+\left\Vert
w_{1}+...+w_{r-1}\right\Vert ^{2})^{\frac{d_{r}}{2}}.
\]
Thus
\[
\left\Vert u_{r}(w_{1},...,w_{r-1})\right\Vert \leq\frac{M_{r}}{C_{r-1}%
}\left\Vert \varphi_{r-1}(w_{1}+...+w_{r-1})\right\Vert ^{\frac{d_{r}%
}{2m_{r-1}}}.
\]
This implies that
\[
\left\Vert w_{r}\right\Vert ^{2}+C_{r-1}(1+\left\Vert w_{1}+...+w_{r-1}%
)\right\Vert ^{2})^{m_{r-1}}\leq\left(  \frac{2M_{r}}{\lambda_{r}C_{r-1}%
}\right)  ^{2}\left\Vert \varphi_{r-1}(w_{1}+...+w_{r-1})\right\Vert
^{\frac{d_{r}}{2m_{r-1}}}
\]%
\[
+\left\Vert \varphi_{r-1}(w_{1}+...+w_{r-1})\right\Vert
\]
Let $L=\min\{C_{r-1},1\}$ then
\[
\left\Vert w_{r}\right\Vert ^{2}+C_{r-1}(1+\left\Vert w_{1}+...+w_{r-1}%
)\right\Vert ^{2})^{m_{r-1}}\geq L(\left\Vert w_{r}\right\Vert ^{2}%
+C_{r-1}(1+\left\Vert w_{1}+...+w_{r-1})\right\Vert ^{2})^{m_{r-1}})
\]%
\[
\geq L_{1}(1+\left\Vert w_{1}+...+w_{r}\right\Vert ^{2})^{m_{r-1}}.
\]
Thus
\[
L_{1}(1+\left\Vert w_{1}+...+w_{r}\right\Vert ^{2})^{m_{r-1}}\leq
L_{2}\left\Vert \varphi_{r-1}(w_{1}+...+w_{r-1})\right\Vert ^{\max
\{\frac{d_{r}}{2m_{r-1}},1\}}.
\]
We have shown $w_{1}+....+w_{r}\notin S_{r}$ there exists $L_{3}>0$ such that%
\[
\left\Vert \varphi_{r}(w_{1}+...+w_{r})\right\Vert \geq L_{3}(1+\left\Vert
w_{1}+...+w_{r}\right\Vert ^{2})^{\frac{m_{r-1}}{\max\{\frac{d_{r}}{2m_{r-1}%
},1\}}}.
\]
To complete the inductive step take $C_{r}=\min\{L_{3},C_{r-1}\}$ and
$m_{r}=\min\{m_{r-1,}\frac{m_{r-1}}{\max\{\frac{d_{r}}{2m_{r-1}},1\}}\}$.
\end{proof}

In the proof of the above lemma we used

\begin{lemma}
\label{trivial}If $a\geq1$ and $b\geq0$ and if $0<m\leq1$ then $a^{m}%
+b\geq\frac{1}{2}(a+b)^{m}$.
\end{lemma}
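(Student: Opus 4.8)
The plan is to deduce this entirely from the subadditivity of the concave power function $t\mapsto t^{m}$ on $[0,\infty)$, followed by a two-case split according to whether $b\le 1$ or $b\ge 1$.

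First I would establish the auxiliary fact that $(x+y)^{m}\le x^{m}+y^{m}$ for all $x,y\ge 0$ when $0<m\le 1$. This is immediate from the elementary observation that $s^{m}\ge s$ for $s\in[0,1]$ (here is where $m\le 1$ enters): if $x+y>0$, put $s=x/(x+y)$, so that $x^{m}=s^{m}(x+y)^{m}\ge s(x+y)^{m}$, and likewise $y^{m}\ge\frac{y}{x+y}(x+y)^{m}$; adding the two inequalities gives the claim, and the case $x+y=0$ is trivial.

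Granting this, it suffices to prove $(a+b)^{m}\le 2(a^{m}+b)$, which is exactly the asserted inequality after dividing by $2$. By the auxiliary fact, $(a+b)^{m}\le a^{m}+b^{m}$, and I would now distinguish two cases. If $0\le b\le 1$, then $b^{m}\le 1\le a^{m}$, the last inequality because $a\ge 1$; hence $(a+b)^{m}\le 2a^{m}\le 2(a^{m}+b)$. If $b\ge 1$, then $b^{m}\le b$ because $m\le 1$; hence $(a+b)^{m}\le a^{m}+b\le 2(a^{m}+b)$. In both cases the bound holds, completing the argument.

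There is no genuine obstacle here; the only points requiring a little care are getting the direction of $s^{m}\ge s$ correct on $[0,1]$ and remembering to invoke the hypothesis $a\ge 1$ precisely in the case $b\le 1$, in order to compare $b^{m}$ with $a^{m}$.
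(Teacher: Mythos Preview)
Your proof is correct and follows essentially the same route as the paper's: both establish the subadditivity inequality $(x+y)^{m}\le x^{m}+y^{m}$ via the observation $s^{m}\ge s$ on $[0,1]$, and both split into the cases $b<1$ (where $b^{m}\le 1\le a^{m}$) and $b\ge 1$ (where $b^{m}\le b$). The only cosmetic difference is the order of presentation: the paper first does the case split to obtain $a^{m}+b\ge\tfrac{1}{2}(a^{m}+b^{m})$ and then invokes subadditivity, whereas you invoke subadditivity first and then split into cases.
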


\begin{proof}
If $b\geq1$ then $b\geq b^{m}$ thus $a^{m}+b\geq a^{m}+b^{m}$. If $b<1$ then
$a^{m}\geq b^{m}$ so $a^{m}+b\geq a^{m}\geq\frac{1}{2}(a^{m}+b^{m})$. Thus in
all cases%
\[
a^{m}+b\geq\frac{1}{2}(a^{m}+b^{m})
\]
Noting that if $x,y\geq0$ and $x+y=1$ then if $0<m\leq1$ then $x^{m}+y^{m}%
\geq1.$ This if $c,d\geq0$%
\[
\left(  \frac{c}{c+d}\right)  ^{m}+\left(  \frac{d}{c+d}\right)  ^{m}\geq1
\]
so
\[
c^{m}+d^{m}\geq(c+d)^{m}.
\]
The lemma now follows.
\end{proof}

\section{Appendix: Basics on parabolic subgroups}

Let $G$ be a connected real reductive group as in \cite{BoWa}.More detail on
the material in this appendix can be found in \cite{RRGI}. As is standard, we
will denote the Lie algebra of a Lie Group, $H$, by $\mathfrak{h}$. Let $K $
be a maximal compact subgroup of $G$ and let $\theta$ be the corresponding
Cartan involution on $\mathfrak{g}$ (and $G$). Fix a non-degenearate
$Ad(G)$--invariant symmetric bilinear form, $B$, on $\mathfrak{g}$ such that
the form
\[
\left\langle X,Y\right\rangle =-B(X,\theta Y)
\]
is positive definite.

We will take a pragmatic approach to parabolic subgroups. Let $H\in
\mathfrak{g}$ be an element such that $ad(H)$ is diagonalizable over
$\mathbb{R}$ then the direct sum of the eigenspaces of $ad(H)$ with
non-negative eidgenvalue, $\mathfrak{p}$, is a parabolic subalgebra and all
parabolic subalgebras of $\mathfrak{g}$ are obtained in this way. Let
$\mathfrak{n}$ be the direct sum of the eigenvalues of $adH$ with strictly
positive eigenvalue. Then every element of $\mathfrak{n}$ is nilpotent. Also,
set $\mathfrak{m}=\ker ad(H)$ then $\mathfrak{m}$ is reductive. Also,
\[
\mathfrak{p}=\mathfrak{m\oplus\mathfrak{n}}
\]
and $\mathfrak{n}$ is the nilpotent radical of $\mathfrak{p}$. A parabolic
subgroup of $G$ is the normalizer in $G$ of, $\mathfrak{p}$, a parabolic subalgebra.

Let
\[
\mathfrak{q}=\{X\in\mathfrak{g}|\theta X=-X\}.
\]
Fix a subspace $\mathfrak{a\subset q}$ maximal subject to the condition that
$[\mathfrak{a},\mathfrak{a}]=0$. If $X\in\mathfrak{q}$ then $ad(X)$ is
self-adjoint relative to $\left\langle ...,...\right\rangle $. Let
$\Phi(\mathfrak{g},\mathfrak{a})$ be the set of weights (i.e. simultaneous
eigenvalues) of the elements of $\mathfrak{a}$ on $\mathfrak{g}$. Then
$\Phi(\mathfrak{g},\mathfrak{a})$ is a (generally non-reduced) root system.
Also, all such maximal abelian subspaces of $\mathfrak{q}$ are conjugate
relative to $Ad(K)$. If $\alpha\in\Phi(\mathfrak{g},\mathfrak{a})$ then set
$\mathfrak{g}_{\alpha}=\{X\in\mathfrak{g}|[h,X]=\alpha(h)X,h\in\mathfrak{a\}}%
$. Let $\Phi^{+}(\mathfrak{g},\mathfrak{a})$ be a system of positive roots for
$\Phi(\mathfrak{g},\mathfrak{a})$ then if $\mathfrak{n}_{o}\mathfrak{=}%
\sum_{\alpha\in\Phi^{+}(\mathfrak{g},\mathfrak{a})}\mathfrak{g}_{\alpha}$ and
$\mathfrak{m}_{o}\mathfrak{=\mathfrak{c}}_{\mathfrak{g}}(\mathfrak{a}%
)=\{X\in\mathfrak{g}|[\mathfrak{a},X]=0\}$ then $\mathfrak{p}_{o}%
\mathfrak{=\mathfrak{m}}_{o}\mathfrak{\mathfrak{\oplus\mathfrak{n}}}_{o} $ is
a minimal parabolic subalgebra of $\mathfrak{g}$ and its normalizer in $G$,
$P_{o}$, is a minimal parabolic subgroup of $G$.. All minimal parabolic
subalgebras of $\mathfrak{g}$ are conjugate under $K$. Thus if $P$ is a
minimal parabolic subgroup of $G$ then $KP=G$.

Fix, $P_{o}$, a minimal parabolic subgroup of $G$ Then we may assume that it
has been constructed as above using a maximal abelian subspace, $\mathfrak{a}
$, of $\mathfrak{q}$. Let $\Delta(\mathfrak{g},\mathfrak{a})$ be the set of
simple roots in $\Phi^{+}(\mathfrak{g},\mathfrak{a})$ and let $H_{o}%
\in\mathfrak{a}$ be such that $\alpha(H_{o})=1$ if $\alpha\in\Delta
(\mathfrak{g},\mathfrak{a})$ and such that the center of $\mathfrak{g}$ is
contained in $\ker ad(H_{o}).$ Let $P$ be a parabolic subgroup of
$\mathfrak{g}$ containing $P_{o}$. We may assume that the element $H$ that was
used above in the definition of $\mathfrak{p}$ is in $\mathfrak{a}$. This
implies that if $\mathfrak{p}=\mathfrak{m}_{P}\mathfrak{\oplus n}_{P}$ as
above then $\mathfrak{n}_{P}\subset\mathfrak{n}_{o}$ and $\mathfrak{m}%
_{P}\supset\mathfrak{m}_{o}$. Thus $ad(H_{o})$ normalizes $\mathfrak{m}%
_{P}\mathfrak{\ } $and $\mathfrak{n}_{P}$. Define $T:\mathfrak{g}%
\rightarrow\mathfrak{g}$ by $T_{|\theta\mathfrak{n}_{P}}=adH_{o|\theta
\mathfrak{n}_{P}},T_{|m_{P}}=0$ and $T_{|\mathfrak{n}_{P}}=adH_{o|\mathfrak{n}%
_{P}}$ then it is easily checked that $T$ defines a derivation on
$\mathfrak{g}$ and since $\ker T$ contains the center of $\mathfrak{g}$ it is
given by $adH^{\prime}$ for $H^{\prime}\in\mathfrak{a.}$ Thus if $P$ is a
parabolic subgroup of $G$ then there exists a grade of $\mathfrak{g}$,
$\mathfrak{g=\oplus}_{j\in\mathbb{Z}}\mathfrak{g}_{j}$ such that
$\mathfrak{m=\mathfrak{g}}_{0}$ and $\mathfrak{n=\oplus}_{j\geq1}%
\mathfrak{g}_{j}$. One can also check that $\mathfrak{g}_{1}\neq0$ with the
choice of $H$ above.

If $P\subset G$ is a subgroup that is its own normalizer and contains a
minimal parabolic subgroup of $G$ then $P$ is a parabolic subgroup of $G$. Fix
a minimal parabolic subgroup, $P_{o}$, with $\mathfrak{p}_{o}=\mathfrak{m}%
_{o}\oplus\mathfrak{n}_{o}$ as above. Then $\mathfrak{m}_{o}=\theta
(\mathfrak{p}_{o})\cap\mathfrak{p}_{o}$. Furthermore, $\mathfrak{m}%
_{o}=\mathfrak{a}_{o}\oplus\mathfrak{m}_{o}\cap\mathfrak{k}$. If $P$ is a
parabolic subgroup containing $P_{o}$ then $\mathfrak{m}_{p}=\theta\left(
\mathfrak{p}\right)  \cap\mathfrak{p}$ and $M_{P}=\theta(P)\cap P$. Let
$C_{P}$ be the center of $M_{P}$ and let $C_{P}^{o}$ be the identity component
of $C_{P}$ and $A_{P}=A_{o}\cap C_{P}^{o}$. Setting ${}^{o}M_{P}=[M_{P}%
,M_{P}]K\cap M_{P}$ we have%
\[
G=N_{P}{}^{o}M_{P}A_{P}K.
\]
If $g\in G$ then define $a_{P}(g)$ by $g\in N_{P}{}^{o}M_{P}a_{P}(g)K$ then
$a_{P}:G\rightarrow A_{P}$ is of class $C^{\infty\text{. }}$ The
decomposition
\[
P=N_{P}{}^{o}M_{P}A_{P}
\]
is called the Langlands decomposition of $P$.

\end{document}